\newtheorem{theorem}{Theorem}[section]
\newtheorem{lemma}{Lemma}[section]
\newtheorem{corollary}{Corollary}[section]
\numberwithin{equation}{section}
\title[Borg Levinson type theorem]{A simple proof of a multidimensional Borg-Levinson type theorem}
\author[Choulli]{Mourad Choulli}
\address{Universit\'e de Lorraine, 34 cours L\'eopold, 54052 Nancy cedex, France}
\thanks{The author supported by the grant ANR-17-CE40-0029 of the French National Research Agency ANR (project MultiOnde).}
\email{mourad.choulli@univ-lorraine.fr}
\date{\today}
\subjclass[2010]{35R30, 35J10.} 
\keywords{Admissible Riemannian manifold, simple Riemannian manifold, boundary spectral data, Borg-Levinson type theorem, Dirichlet-to-Neumann map}
\begin{document}

\begin{abstract}
We provide a simple and short proof of a multidimensional Borg-Levinson type theorem. Precisely, we prove that the knowledge of spectral boundary data determine uniquely the corresponding potential  appearing in the Sch\"odinger operator on an admissible Riemannian manifold. We also sketch the proof of uniqueness in the case of incomplete spectral boundary data. The new results in the present work complete those existing in the literature.
\end{abstract}

\maketitle

\section{Introduction}\label{section1}

Let $\mathcal{M}=(\mathcal{M},\mathfrak{g})$ be a smooth compact Riemannian manifold, of dimension $n\ge3$,  with boundary $\partial \mathcal{M}$. We recall that, in local coordinates,  the usual Laplace-Beltrami operator is given by
\[
\Delta_\mathfrak{g}=\frac{1}{\sqrt{|\mathfrak{g}|}}\sum_{j,k=1}^n\frac{\partial}{\partial x_j}\left(\sqrt{|\mathfrak{g}|}\mathfrak{g}^{jk}\frac{\partial}{\partial x_k}\, \cdot \right).
\]

We define, for $q\in L^\infty (\mathcal{M},\mathbb{R})$,  the unbounded operator $L=L(q)$ acting on $L^2(\mathcal{M})$ as follows
\[
L=-\Delta _{\mathfrak{g}}+q\quad \mbox{with}\quad D\left(L \right)=H_0^1(\mathcal{M})\cap H^2(\mathcal{M}).
\]
As $L$ is self-adjoint operator with compact resolvent, its spectrum is reduced to a sequence of eigenvalues:
\[
-\infty < \lambda_1\le \ldots \lambda_k\le \ldots \quad \mbox{and}\quad \lambda_k\rightarrow \infty \; \mbox{as}\; k\rightarrow \infty.
\]
Of course $\lambda_k=\lambda_k (q)$, $k\ge 1$.

Furthermore, there exists $(\phi_k)$, $\phi_k=\phi_k(q)$, $k\ge 1$, an orthonormal basis of $L^2(\mathcal{M})$ consisting of eigenfunctions. Each $\phi_k$ is associated to $\lambda_k$, $k\ge 1$.

For simplicity convenience, we use the notation 
\[
\psi_k=\partial_\nu  \phi_k|{_{\partial \mathcal{M}}}, \quad k\ge 1,
\]
where $\nu$ is the unit normal exterior vector field on $\partial \mathcal{M}$ with respect to $\mathfrak{g}$.

When $q\in L_+^\infty (\mathcal{M})=\{q\in L^\infty (\mathcal{M},\mathbb{R});\; q\ge 0\}$, $0$ is not in the spectrum of $L$ and all the eigenvalues of $L$ are positive. Whence, for any $f\in H^{1/2}(\partial \mathcal{M})$, the BVP
\begin{equation}\label{bvp}
(-\Delta _{\mathfrak{g}}+q)u=0\; \mbox{in}\; \mathcal{M}\quad \mbox{and}\quad \mbox u=f\; \mbox{on}\; \partial \mathcal{M}
\end{equation}
admits a unique solution $u=u(q,f)\in H^1(\mathcal{M})$ with $\partial_\nu u\in H^{-1/2}(\partial \mathcal{M})$. Moreover 
\[
\|\partial_\nu u\|_{H^{-1/2}(\partial \mathcal{M})}\le C\|f\|_{H^{1/2}(\partial \mathcal{M})},
\]
where the constant $C$ is independent of $f$. In other words, the mapping
\[
\Lambda(q) :f\in H^{1/2}(\partial \mathcal{M})\rightarrow \partial_\nu u\in H^{-1/2}(\partial \mathcal{M})
\]
defines a bounded operator. The operator $\Lambda (q)$ is usually called the Dirichlet-to-Neumann map associated to $q$.

Pick $\tilde{q}\in L_+^\infty (\mathcal{M})$. Set $\tilde{\lambda}_k=\lambda_k (\tilde{q})$, $\tilde{\phi}_k=\phi_k (\tilde{q})$ and $\tilde{\psi}_k=\psi_k (\tilde{q})$. Introduce then the notation 
\[
\mathfrak{D}(q,\tilde{q})=\sum_{k\ge 1}\left|\lambda_k-\tilde{\lambda}_k\right|+\sum_{k\ge 1} \|\tilde{\psi}_k-\psi_k\|_{L^2(\partial \mathcal{M})}.
\]

We aim to prove the following result:
 
 \begin{theorem}\label{theorem1} 
 Let $\aleph >0$. Suppose that $0\le q\le \aleph$, $0\le \tilde{q}\le \aleph$ and $\mathfrak{D}(q,\tilde{q})<\infty$. Then $\Lambda (q) -\Lambda(\tilde{q})$ extends to a bounded operator on $L^2(\partial \mathcal{M})$ and
\[
\|\Lambda (q) -\Lambda(\tilde{q})\|_{\mathscr{B}(L^2(\partial \mathcal{M}))}\le C\mathfrak{D}(q,\tilde{q}),
\]
where the constant $C$ only depends on $n$, $\mathcal{M}$ and $\aleph$.
 \end{theorem}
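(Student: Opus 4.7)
The plan is to combine Alessandrini's identity with the spectral expansion of the BVP solutions to obtain a telescoping series representation of the bilinear form of $\Lambda(q) - \Lambda(\tilde q)$, and then to estimate it term by term using classical bounds on boundary traces of Dirichlet eigenfunctions. First, two integrations by parts yield Alessandrini's identity
\[
\langle (\Lambda(q) - \Lambda(\tilde q)) f, g\rangle_{L^2(\partial \mathcal M)} = \int_\mathcal M (q - \tilde q)\, u(q, f)\, u(\tilde q, g)\, dV_\mathfrak g,
\]
while Green's identity applied to $u(q, f)$ against $\phi_k$ gives the basic spectral relation
\[
(u(q, f), \phi_k)_{L^2(\mathcal M)} = -\lambda_k^{-1}(f, \psi_k)_{L^2(\partial \mathcal M)},
\]
and analogously for $\tilde q$. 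Inserting the spectral expansions of $u(q, f)$ and $u(\tilde q, g)$ into the Alessandrini integral, using the mixed matrix-element identity $((q - \tilde q)\phi_k, \tilde\phi_l)_{L^2(\mathcal M)} = (\lambda_k - \tilde\lambda_l)(\phi_k, \tilde\phi_l)$ (another integration by parts) together with the auxiliary identity $\sum_k(f, \psi_k)_\partial(\phi_k, \tilde\phi_l)_{L^2(\mathcal M)} = (f, \tilde\psi_l)_\partial$ (once more from Green's identity applied to $u(q, f)$ and $\tilde\phi_l$), I expect the representation
\[
\langle (\Lambda(q) - \Lambda(\tilde q))f, g\rangle_{L^2(\partial \mathcal M)} = -\sum_{k \ge 1}\left[\frac{(f, \psi_k)_\partial(g, \psi_k)_\partial}{\lambda_k} - \frac{(f, \tilde\psi_k)_\partial(g, \tilde\psi_k)_\partial}{\tilde\lambda_k}\right]
\]
for $f, g$ in a dense subspace such as $C^\infty(\partial \mathcal M)$.

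The quantitative estimate then follows from the elementary telescoping
\[
\frac{a_kb_k}{\lambda_k} - \frac{\tilde a_k\tilde b_k}{\tilde\lambda_k} = \frac{(a_k - \tilde a_k)b_k + \tilde a_k(b_k - \tilde b_k)}{\lambda_k} + \tilde a_k\tilde b_k\,\frac{\tilde\lambda_k - \lambda_k}{\lambda_k\tilde\lambda_k}
\]
with $a_k = (f, \psi_k)_\partial$, $b_k = (g, \psi_k)_\partial$ (tildes analogously), which splits each summand into pieces scaling either with $\|\psi_k - \tilde\psi_k\|_{L^2(\partial \mathcal M)}$ or with $|\lambda_k - \tilde\lambda_k|$. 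Cauchy-Schwarz on $\partial \mathcal M$, together with the classical Rellich-type bound $\|\psi_k\|_{L^2(\partial \mathcal M)} \le C\lambda_k^{1/2}$ for $L^2(\mathcal M)$-normalized Dirichlet eigenfunctions (obtained from a multiplier identity against a vector field extending $\nu$ inward), and the uniform min-max bounds $\lambda_k(0) \le \lambda_k, \tilde\lambda_k \le \lambda_k(0) + \aleph$ (from $0 \le q, \tilde q \le \aleph$), then reduce each piece to at most $C\|f\|_{L^2(\partial \mathcal M)}\|g\|_{L^2(\partial \mathcal M)}\bigl(\|\psi_k - \tilde\psi_k\|_{L^2(\partial \mathcal M)} + |\lambda_k - \tilde\lambda_k|\bigr)$ with $C = C(n, \mathcal M, \aleph)$. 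Summing in $k$, invoking $\mathfrak D(q, \tilde q) < \infty$, and extending by density to $L^2(\partial \mathcal M)$ produces the claimed bound.

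The main obstacle is the convergence status of the spectral series: neither $\sum_k \lambda_k^{-1}(f, \psi_k)_\partial(g, \psi_k)_\partial$ nor the $\tilde q$-analogue converges absolutely (indeed $\Lambda(q)$ is not bounded on $L^2(\partial \mathcal M)$), so only the telescoped difference is meaningful, and the rearrangement from the double sum arising from Alessandrini into the single-indexed telescope requires a careful truncation argument. A convenient way to bypass this is to work directly with the truncated difference operators $B_N = -\sum_{k \le N}\bigl(\lambda_k^{-1}\psi_k \otimes \psi_k - \tilde\lambda_k^{-1}\tilde\psi_k \otimes \tilde\psi_k\bigr) \in \mathscr B(L^2(\partial \mathcal M))$: the term-by-term estimate above makes $(B_N)$ Cauchy with $\|B_N\| \le C\mathfrak D(q, \tilde q)$ uniformly in $N$, and a limiting argument based on Alessandrini's identity applied to smooth $f, g$ identifies the limit $B$ with $\Lambda(q) - \Lambda(\tilde q)$, completing the proof.
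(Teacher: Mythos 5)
Your proposal is correct and follows essentially the same strategy as the paper: represent $\Lambda(q)f$ and $\Lambda(\tilde q)f$ through the boundary spectral series $-\sum_k \lambda_k^{-1}\langle f|\psi_k\rangle\psi_k$, telescope the difference into pieces controlled by $|\lambda_k-\tilde\lambda_k|$ and $\|\psi_k-\tilde\psi_k\|_{L^2(\partial\mathcal M)}$, estimate term by term using $\|\psi_k\|_{L^2(\partial\mathcal M)}\lesssim \lambda_k^{1/2}$ (the paper uses the cruder $\|\psi_k\|\lesssim\|\phi_k\|_{H^2}\lesssim\lambda_k$, which also suffices) together with the min--max comparability of $\lambda_k$ and $\tilde\lambda_k$, and conclude by density. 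The only real difference is how the representation formula is justified: you go through Alessandrini's identity plus a truncation argument for the non-absolutely-convergent individual series, whereas the paper sidesteps this by introducing the dense trace space $\mathfrak H^s$ on which each series for $\Lambda(q)f$ converges absolutely in $H^2(\mathcal M)$ (its Lemma 2.2), making the split into the three terms immediate.
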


Let $(\mathscr{M},g)$ a compact Riemannian manifold with boundary $\partial \mathscr{M}$. We say that $\mathscr{M}$ is admissible if $\mathscr{M}\Subset \mathbb{R}\times \mathscr{M}_0$, for some (n-1)-dimensional simple manifold $(\mathscr{M}_0,g_0)$,  and if $g=c(\mathfrak{e}\oplus g_0)$, where $\mathfrak{e}$ is the Euclidean metric on $\mathbb{R}$ and $c$ is a smooth positive function on $\mathscr{M}$. A compact Riemannian manifold with boundary $(\mathscr{M}_0,g_0)$ is called simple  if, for any $x\in \mathscr{M}_0,$ the exponential map $\exp_x$ with its maximal domain of definition is a diffeomorphism onto $\mathscr{M}_0$, and if $\partial \mathscr{M}_0$ is strictly convex (which means that the second fundamental form of $\partial \mathscr{M}_0\hookrightarrow \mathscr{M}_0$ is positive definite).

Observing that 
\[
\|\Lambda (q) -\Lambda(\tilde{q})\|_{\mathscr{B}(H^{1/2}(\partial \mathcal{M}),H^{-1/2}(\partial \mathcal{M}))}\le \|\Lambda (q) -\Lambda(\tilde{q})\|_{\mathscr{B}(L^2(\partial \mathcal{M}))},
\]
and taking into account that $\mathfrak{D}(q,\tilde{q})=\mathfrak{D}(q-\lambda ,\tilde{q}-\lambda)$, for any $\lambda$ not belonging to the spectrum of $L(q)$ and $L(\tilde{q})$, we deduce as a straightforward consequence of Theorem \ref{theorem1} and \cite[Theorem 1]{CS} the following stability inequality: 

\begin{corollary}\label{corollary1}
Let $\aleph>0$, $t\in (0,1/2)$ and assume that $\mathcal{M}$ is admissible with $\mathcal{M}\Subset \mathbb{R}\times \mathcal{M}_0$. Then there exist two constants $C>0$ and $0<c<1$, only depending on $n$, $\mathcal{M}$ and $\aleph$, so that, for any $q,\tilde{q}\in L^\infty (\mathcal{M},\mathbb{R}) \cap H^t(\mathcal{M})$ satisfying 
\[ 
\|q\|_{L^\infty(\mathcal{M})\cap H^t(\mathcal{M})}\le \aleph ,\quad \|\tilde{q}\|_{L^\infty(\mathcal{M})\cap H^t(\mathcal{M})}\le \aleph\quad  \mbox{and}\quad  \mathfrak{D}(q,\tilde{q})\le c, 
\]
we have
\[
\| q-\tilde{q}\|_{L^2(\mathbb{R},H^{-3}(\mathcal{M}_0))}\le C\left| \ln \left[\mathfrak{D}(q,\tilde{q})+\left|\ln \left( \mathfrak{D}(q,\tilde{q})\right)\right|^{-1}\right]\right|^{-t/4}.
\]
\end{corollary}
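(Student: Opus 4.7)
My plan is to deduce Corollary \ref{corollary1} by chaining Theorem \ref{theorem1} with the stability result \cite[Theorem 1]{CS}, following exactly the route sketched in the paragraph immediately preceding the corollary. I would first reduce to non-negative potentials by a shift: set $q_0 := q + \aleph$ and $\tilde q_0 := \tilde q + \aleph$, both elements of $L_+^\infty(\mathcal{M}) \cap H^t(\mathcal{M})$ with $0 \le q_0, \tilde q_0 \le 2\aleph$ and $L^\infty \cap H^t$ norms still controlled by a fixed multiple of $\aleph$. Since shifting a potential by a constant translates every eigenvalue by that constant and leaves the eigenfunctions (hence the normal derivatives $\psi_k$) unchanged, one has $\mathfrak{D}(q_0, \tilde q_0) = \mathfrak{D}(q, \tilde q)$, and of course $q_0 - \tilde q_0 = q - \tilde q$.

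Applying Theorem \ref{theorem1} to the pair $(q_0, \tilde q_0)$ (with $2\aleph$ in place of $\aleph$) then yields
\[
\|\Lambda(q_0) - \Lambda(\tilde q_0)\|_{\mathscr{B}(L^2(\partial \mathcal{M}))} \le C\, \mathfrak{D}(q, \tilde q),
\]
and the continuous embedding $H^{1/2}(\partial \mathcal{M}) \hookrightarrow L^2(\partial \mathcal{M}) \hookrightarrow H^{-1/2}(\partial \mathcal{M})$, which is the inequality recorded in the excerpt, upgrades this to the same bound in $\mathscr{B}(H^{1/2}(\partial \mathcal{M}), H^{-1/2}(\partial \mathcal{M}))$. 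At this stage I would invoke \cite[Theorem 1]{CS}, which for admissible manifolds $\mathcal{M} \Subset \mathbb{R} \times \mathcal{M}_0$ supplies a double-logarithmic estimate of the shape
\[
\|q_0 - \tilde q_0\|_{L^2(\mathbb{R}, H^{-3}(\mathcal{M}_0))} \le C' \bigl|\ln [\eta + |\ln \eta|^{-1}]\bigr|^{-t/4},
\]
where $\eta := \|\Lambda(q_0) - \Lambda(\tilde q_0)\|_{\mathscr{B}(H^{1/2}, H^{-1/2})}$, valid for $\eta$ below an absolute threshold; the $L^\infty \cap H^t$ bounds required there on $q_0, \tilde q_0$ follow from those on $q, \tilde q$.

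The final step is to plug the estimate $\eta \le C \mathfrak{D}(q, \tilde q)$ into the above inequality. The function $s \mapsto |\ln[s + |\ln s|^{-1}]|^{-t/4}$ is monotone increasing on some right neighbourhood of $0$, so taking the constant $c$ in the hypothesis sufficiently small (depending only on $n$, $\mathcal{M}$ and $\aleph$) forces both $\mathfrak{D}(q, \tilde q)$ and $C\mathfrak{D}(q, \tilde q)$ to lie inside that neighbourhood, which lets me absorb the multiplicative constant at the cost of enlarging $C'$. Combining this with $q - \tilde q = q_0 - \tilde q_0$ yields the stated bound. I do not anticipate any substantive obstacle: all the deep analysis is packaged into \cite[Theorem 1]{CS}, and the only genuine calculation is this final monotonicity/absorption argument, for which the smallness assumption $\mathfrak{D}(q, \tilde q) \le c$ is precisely what is needed.
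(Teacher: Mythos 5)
Your proposal is correct and follows exactly the route the paper takes: the paper itself only sketches the argument via the observation that $\mathfrak{D}(q,\tilde{q})=\mathfrak{D}(q-\lambda,\tilde{q}-\lambda)$ (your shift by $\aleph$), the embedding $\|\cdot\|_{\mathscr{B}(H^{1/2},H^{-1/2})}\le\|\cdot\|_{\mathscr{B}(L^2)}$, Theorem \ref{theorem1}, and \cite[Theorem 1]{CS}. Your write-up simply fills in the same chain with the routine monotonicity/absorption details, so there is nothing to correct.
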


This corollary says  in particular that in an admissible manifold the spectral boundary data $(\lambda_k(q),\psi_k(q))_{k\ge 1}$ determine uniquely $q\in L^\infty(\mathcal{M},\mathbb{R})\cap H^t(\mathcal{M})$:

\begin{theorem}\label{theorem2}
Let $t\in (0,1/2)$ and assume that $\mathcal{M}$ is admissible. If $q,\tilde{q}\in L^\infty(\mathcal{M},\mathbb{R})\cap H^t(\mathcal{M})$ satisfy
\[
\lambda_k(q)=\lambda_k(\tilde{q})\quad \mbox{and}\quad \psi_k(q)=\psi_k(\tilde{q}),\quad k\ge 1,
\]
then $q=\tilde{q}$.
\end{theorem}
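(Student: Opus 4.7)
The strategy is to read Theorem~\ref{theorem2} off Corollary~\ref{corollary1} (and hence off Theorem~\ref{theorem1} combined with \cite[Theorem 1]{CS}), using that the hypotheses of Theorem~\ref{theorem2} force the spectral quantity $\mathfrak{D}(q,\tilde q)$ to vanish. Indeed, if $\lambda_k(q)=\lambda_k(\tilde q)$ and $\psi_k(q)=\psi_k(\tilde q)$ for every $k\ge 1$, then both sums defining $\mathfrak{D}(q,\tilde q)$ are identically zero.

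The one technical issue is that Theorem~\ref{theorem1} only applies to nonnegative potentials, whereas Theorem~\ref{theorem2} imposes no sign condition on $q,\tilde q$. To remedy this, I would fix $\aleph>0$ with $\|q\|_{L^\infty(\mathcal{M})}\le\aleph$ and $\|\tilde q\|_{L^\infty(\mathcal{M})}\le\aleph$, and then replace $q,\tilde q$ by $q_\aleph=q+\aleph$ and $\tilde q_\aleph=\tilde q+\aleph$. A constant shift of the potential shifts every eigenvalue by the same constant and leaves each eigenfunction, and hence its normal derivative $\psi_k$, unchanged; this is precisely the invariance noted in the paper just before Corollary~\ref{corollary1}. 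Consequently $\mathfrak{D}(q_\aleph,\tilde q_\aleph)=\mathfrak{D}(q,\tilde q)=0$, and Theorem~\ref{theorem1} applied to the nonnegative pair $(q_\aleph,\tilde q_\aleph)$ yields $\Lambda(q_\aleph)=\Lambda(\tilde q_\aleph)$ as bounded operators from $H^{1/2}(\partial\mathcal{M})$ to $H^{-1/2}(\partial\mathcal{M})$. Since $q,\tilde q\in L^\infty(\mathcal{M},\mathbb{R})\cap H^t(\mathcal{M})$ on the admissible manifold $\mathcal{M}$, the inverse DtN uniqueness statement \cite[Theorem 1]{CS} (specialising its logarithmic stability bound to equal DtN maps) forces $q_\aleph=\tilde q_\aleph$, i.e.\ $q=\tilde q$.

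The genuinely hard input is therefore external: it is the reconstruction of the potential on an admissible manifold from its Dirichlet-to-Neumann map, which rests on the Complex Geometric Optics machinery underlying \cite{CS}. Within the present paper the only nontrivial step is the passage from equal spectral boundary data to equal DtN maps, and that is exactly what Theorem~\ref{theorem1} delivers; once this is in hand, Theorem~\ref{theorem2} is essentially a one-line consequence. One could alternatively let $\mathfrak{D}(q,\tilde q)\to 0^+$ in the inequality of Corollary~\ref{corollary1}, since the right-hand side tends to $0$, but this detour through a quantitative bound is unnecessary once the exact equality $\Lambda(q_\aleph)=\Lambda(\tilde q_\aleph)$ is available.
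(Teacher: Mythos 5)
Your proposal is correct and follows essentially the same route as the paper: Theorem \ref{theorem1} (combined with the shift-invariance of $\mathfrak{D}$, which the paper itself invokes just before Corollary \ref{corollary1} to dispense with the sign condition) reduces equal spectral boundary data to equal Dirichlet-to-Neumann maps, and then \cite[Theorem 1]{CS} gives uniqueness on an admissible manifold. The paper simply reads the conclusion off the stability inequality of Corollary \ref{corollary1} with $\mathfrak{D}(q,\tilde q)=0$, which is the ``detour'' you mention at the end; both variants are fine.
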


Results of the same kind as in Theorem \ref{theorem2} are known in the literature as multidimensional Borg-Levinson type theorems.

The paper by Nachman, Sylvester and Uhlmann \cite{NSU} was the starting point of new developments and results on  multidimensional Borg-Levinson type theorems. We quote here the following short list of references: \cite{AS, Bel87, Bel92, BCDKS, BCY,BCY2,Ch,ChS,Is,KK,KKL,KKS,Ki1,KOM}. Of course many other works on spectral inverse problems exist in the literature. A  short survey on multidimensional Borg-Levinson type theorems can be found in \cite{BCDKS}. The case of unbounded potentials was considered by Pohjola in  \cite{Po}. We also mention a new stability inequality in two dimensional case, recently  established by Imanuvilov and Yamamoto in \cite{IY}.

\section{Proof of the main result}

Henceforward, the usual scalar products on $L^2(\mathcal{M})$ and $L^2(\partial \mathcal{M})$ are denoted respectively by $(\cdot |\cdot )$ and $\langle \cdot |\cdot \rangle$.

The following lemma will be useful in the sequel.
\begin{lemma}\label{lemma0}
Let $q\in L^\infty_+(\mathcal{M})$, $f\in H^{1/2}(\mathcal{M})$ and $u=u(q,f)$. We have
\begin{equation}\label{eq0}
u=-\sum_{k\ge 1}\frac{\langle f|\psi_k\rangle}{\lambda_k}\phi_k
\end{equation}
and hence
\[
\sum_{k\ge 1}\frac{|\langle f|\psi_k\rangle|^2}{\lambda_k^2}=\|u\|_{L^2(\mathcal{M})}^2.
\]
\end{lemma}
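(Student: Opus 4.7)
The plan is to expand $u$ in the orthonormal basis $(\phi_k)$ of $L^2(\mathcal{M})$ and identify the Fourier coefficients $(u|\phi_k)$ by means of Green's formula. Since $\phi_k\in H^2(\mathcal{M})\cap H_0^1(\mathcal{M})$, the trace $\psi_k=\partial_\nu\phi_k|_{\partial\mathcal{M}}$ lies in $H^{1/2}(\partial\mathcal{M})\subset L^2(\partial\mathcal{M})$, so the boundary pairing $\langle f|\psi_k\rangle$ is well defined as the duality pairing between $H^{1/2}(\partial\mathcal{M})$ and $H^{-1/2}(\partial\mathcal{M})$ (and reduces to the $L^2$ inner product when $f\in L^2(\partial\mathcal{M})$).

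First I would apply Green's formula to the pair $(u,\phi_k)$. Formally,
\[
\int_{\mathcal{M}}\bigl[(-\Delta_\mathfrak{g}+q)u\bigr]\phi_k\,dv_\mathfrak{g}-\int_{\mathcal{M}}u\bigl[(-\Delta_\mathfrak{g}+q)\phi_k\bigr]\,dv_\mathfrak{g}=\int_{\partial\mathcal{M}}\bigl(u\,\partial_\nu\phi_k-\phi_k\,\partial_\nu u\bigr)\,d\sigma_\mathfrak{g}.
\]
Using $(-\Delta_\mathfrak{g}+q)u=0$ in $\mathcal{M}$, $(-\Delta_\mathfrak{g}+q)\phi_k=\lambda_k\phi_k$ in $\mathcal{M}$, $u=f$ and $\phi_k=0$ on $\partial\mathcal{M}$, the identity collapses to
\[
-\lambda_k(u|\phi_k)=\langle f|\psi_k\rangle,
\]
which yields the Fourier coefficient $(u|\phi_k)=-\langle f|\psi_k\rangle/\lambda_k$. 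Expanding $u$ in the basis $(\phi_k)$ of $L^2(\mathcal{M})$ then gives \eqref{eq0}, and Parseval's identity immediately produces
\[
\sum_{k\ge 1}\frac{|\langle f|\psi_k\rangle|^2}{\lambda_k^2}=\sum_{k\ge 1}|(u|\phi_k)|^2=\|u\|_{L^2(\mathcal{M})}^2.
\]

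The only delicate point is the rigorous use of Green's formula, since $f$ is only in $H^{1/2}(\partial\mathcal{M})$ and hence $u$ only lies in $H^1(\mathcal{M})$, with $\partial_\nu u\in H^{-1/2}(\partial\mathcal{M})$. I would handle this by a standard density argument: choose $f_j\in C^\infty(\partial\mathcal{M})$ with $f_j\to f$ in $H^{1/2}(\partial\mathcal{M})$, denote by $u_j=u(q,f_j)\in H^2(\mathcal{M})$ the corresponding classical solutions (for which Green's formula applies in the usual sense), and obtain
\[
(u_j|\phi_k)=-\frac{\langle f_j|\psi_k\rangle}{\lambda_k}.
\]
Continuity of the solution map $H^{1/2}(\partial\mathcal{M})\to H^1(\mathcal{M})$ and of the trace/normal-derivative pairings then lets me pass to the limit $j\to\infty$ to recover the coefficient identity for the original $u$. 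Once this is established, the $L^2$ expansion and Parseval identity finish the proof.
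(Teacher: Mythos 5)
Your proof is correct and follows essentially the same route as the paper: expand $u$ in the eigenbasis $(\phi_k)$, identify the coefficients $(u|\phi_k)=-\langle f|\psi_k\rangle/\lambda_k$ via Green's formula applied to the pair $(u,\phi_k)$, and conclude with Parseval. The added density argument to justify Green's formula for $u\in H^1(\mathcal{M})$ with $\partial_\nu u\in H^{-1/2}(\partial\mathcal{M})$ is a welcome extra precaution that the paper leaves implicit.
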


\begin{proof}
As $(\phi_k)$ is an orthonormal basis of $L^2(\mathcal{M})$, we obtain
\begin{equation}\label{eq2}
u=\sum_{k\ge 1}(u|\phi_k)\phi_k.
\end{equation}
Using that $u$ is the solution of the BVP \eqref{bvp} and Green's formula in order to get
\begin{align*}
(u|\lambda_k\phi_k)&=(u| (-\Delta_{\mathfrak{g}}+q)\phi_k)
\\
&= ((-\Delta_{\mathfrak{g}}+q)u|\phi_k)-\langle u|\partial_\nu \phi_k\rangle
\\
&=-\langle f|\partial_\nu \phi_k\rangle .
\end{align*}
That is we have
\[
(u|\phi_k)=-\frac{\langle f|\psi_k\rangle}{\lambda_k}.
\]
This identity in \eqref{eq2} yields \eqref{eq0}.
\end{proof}

Pick $(\rho_k)$ an orthonormal basis of $L^2(\mathcal{M} )$, $s\ge 0$ and define 
\[
\mathscr{H}^s=\left\{w\in H^2(\mathcal{M} ),\; \sum_{k\ge 1}k^{4s/n}\left(k^{4/n}|(w|\rho_k)|^2+|(\Delta_\mathfrak{g}w|\rho_k)|^2\right)<\infty\right\}.
\]
We have by Parseval's identity
\begin{align*}
&\sum_{k\ge 1}k^{4s/n}\left(k^{4/n}|(w|\rho_k)|^2+|(\Delta_\mathfrak{g}w|\rho_k)|^2\right)
\\
&\qquad =\left\| \sum_{k\ge 1} k^{2(s+1)/n}(w|\rho_k)\rho_k\right\|_{L^2(\mathcal{M})}^2
+\left\| \sum_{k\ge 1} k^{2s/n}(\Delta_{\mathfrak{g}}w|\rho_k)\rho_k\right\|_{L^2(\mathcal{M})}^2.
\end{align*}
In consequence, the left hand side of this identity does not depend on the choice of the orthonormal basis $(\rho_k)$. We endow $\mathscr{H}^s$ with the norm
\[
\|w\|_{\mathscr{H}^s}=\|w\|_{H^2(\mathcal{M})}+\left(\sum_{k\ge 1}k^{4s/n}\left(k^{4/n}|(w|\rho_k)|^2+|(\Delta_\mathfrak{g}w|\rho_k)|^2\right)\right)^{1/2}.
\]

It is not hard check that $\mathscr{H}^s$ contains all the eigenfunctions $(\varphi_k)$ of the operator $-\Delta_{\mathfrak{g}}$, under Neumann boundary condition, and therefore it contains also the closure, with respect to $\|\cdot \|_{\mathscr{H}^s}$, of the vector space spanned by these eigenfunctions. This can be easily seen just  by taking $\rho_k=\varphi_k$, $k\ge 1$, in the definition of $\mathscr{H}^s$.

We associate to $\mathscr{H}^s$ the  trace space
\[
\mathfrak{H}^s=\{ g\in H^{3/2}(\partial \mathcal{M});\; g=w|_{\partial \mathcal{M}}\;  \mbox{for some}\; w\in \mathscr{H}^s\}
\]
that we equip  with its natural quotient norm
\[
\|g\|_{\mathfrak{H}^s}=\inf\left\{\|w\|_{\mathscr{H}^s};\; w|_{\partial \mathcal{M}}=g\right\}.
\]

\begin{lemma}\label{lemma1}
We fix $s>n/4$. Then, for any $g\in \mathfrak{H}^s$, the series $\sum_{k\ge 1}\frac{\langle g|\psi_k\rangle}{\lambda_k}\phi_k $ converges in $H^2(\mathcal{M})$ with
\begin{equation}\label{1}
\sum_{k\ge 1}\frac{|\langle g|\psi_k\rangle|}{\lambda_k}\|\phi_k \|_{H^2(\mathcal{M})}\le C\|g\|_{\mathfrak{H}^s},
\end{equation}
where the constant $C$ is independent of $g$.
\end{lemma}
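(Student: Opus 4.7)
The plan is to reduce \eqref{1} to a weighted Cauchy-Schwarz in the spectral parameter $k$, via elliptic regularity, Weyl's asymptotic, and Green's formula applied to an arbitrary $\mathscr{H}^s$-extension of $g$.

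First, elliptic regularity for the Dirichlet problem $-\Delta_\mathfrak{g}\phi_k=(\lambda_k-q)\phi_k$, $\phi_k|_{\partial\mathcal{M}}=0$, yields $\|\phi_k\|_{H^2(\mathcal{M})}\le C\lambda_k$ with $C=C(\mathcal{M},\aleph)$. Weyl's asymptotic for $L(q)$ gives $\lambda_k\asymp k^{2/n}$, so $(\lambda_k^{-2s})_k$ is summable precisely when $s>n/4$, which is our hypothesis. It therefore suffices to prove $\sum_k|\langle g|\psi_k\rangle|\le C\|g\|_{\mathfrak{H}^s}$.

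For any $w\in\mathscr{H}^s$ with $w|_{\partial\mathcal{M}}=g$, Green's identity combined with $\phi_k|_{\partial\mathcal{M}}=0$ and $\Delta_\mathfrak{g}\phi_k=(q-\lambda_k)\phi_k$ gives
\begin{equation*}
\langle g|\psi_k\rangle=(w|\Delta_\mathfrak{g}\phi_k)-(\Delta_\mathfrak{g}w|\phi_k)=-\lambda_k(w|\phi_k)+(qw|\phi_k)-(\Delta_\mathfrak{g}w|\phi_k).
\end{equation*}
Summing the termwise triangle inequality and applying Cauchy-Schwarz with weight $\lambda_k^{-2s}$ on each of the three pieces produces weighted $\ell^2$-sums $\sum_k\lambda_k^{2(s+1)}|(w|\phi_k)|^2$ and $\sum_k\lambda_k^{2s}|(\Delta_\mathfrak{g}w|\phi_k)|^2$. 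Converting $\lambda_k\asymp k^{2/n}$ via Weyl, these become $\sum_k k^{4(s+1)/n}|(w|\phi_k)|^2$ and $\sum_k k^{4s/n}|(\Delta_\mathfrak{g}w|\phi_k)|^2$, both bounded by $\|w\|_{\mathscr{H}^s}^2$ directly from the definition of $\mathscr{H}^s$, taking the orthonormal basis to be $\rho_k=\phi_k$. Passing to the infimum over $w$ converts $\|w\|_{\mathscr{H}^s}$ into $\|g\|_{\mathfrak{H}^s}$, and $H^2$-convergence of the series is immediate from the absolute summability of its $H^2$-norms.

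The delicate piece is the middle term $\sum_k|(qw|\phi_k)|$: because $q\in L^\infty$ only, the product $qw$ is no better than $L^2$, so Bessel yields only $\sum_k|(qw|\phi_k)|^2\le\aleph^2\|w\|_{\mathscr{H}^s}^2$ and no weighted $\ell^2$ control. My approach would be to exploit the rearrangement
\begin{equation*}
(qw|\phi_k)=\lambda_k(w|\phi_k)+(\Delta_\mathfrak{g}w|\phi_k)+\langle g|\psi_k\rangle,
\end{equation*}
which is another form of the same Green identity, in order to re-express this coefficient in terms of quantities already controlled plus $\langle g|\psi_k\rangle$ itself; combined with the earlier estimates and careful tracking of constants, the $\langle g|\psi_k\rangle$ contribution should be absorbed back into the left-hand side. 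This potential term is the technical heart of the proof and the main obstacle I expect.
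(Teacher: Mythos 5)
Your overall reduction is exactly the paper's argument: the elliptic estimate $\|\phi_k\|_{H^2(\mathcal{M})}\le C\lambda_k$, Weyl's law making $\sum_k\lambda_k^{-2s}$ converge precisely for $s>n/4$, Green's identity applied to an arbitrary extension $w\in\mathscr{H}^s$ of $g$, and weighted Cauchy--Schwarz against the $\mathscr{H}^s$-norm computed in the basis $\rho_k=\phi_k$, followed by the infimum over $w$. The first and third pieces are handled correctly and identically to the paper.

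The gap is the middle term, and the fix you propose cannot work. The identity $(qw|\phi_k)=\lambda_k(w|\phi_k)+(\Delta_\mathfrak{g}w|\phi_k)+\langle g|\psi_k\rangle$ is the same Green identity solved for $(qw|\phi_k)$; substituting it back into $\langle g|\psi_k\rangle=-\lambda_k(w|\phi_k)+(qw|\phi_k)-(\Delta_\mathfrak{g}w|\phi_k)$ returns the tautology $\langle g|\psi_k\rangle=\langle g|\psi_k\rangle$, and at the level of the summed estimate the quantity you hope to absorb enters the right-hand side with coefficient exactly $1$, so subtracting it gives $0\le(\text{something finite})$ and no information; absorption needs a coefficient strictly less than $1$. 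For comparison, the paper does not isolate this term at all: it passes in one step from $(w|(-\lambda_k+q)\phi_k)$ to the bound $C\left(k^{2/n}|(\phi_k|w)|+|(\phi_k|\Delta_\mathfrak{g}w)|\right)$, i.e.\ it treats $(qw|\phi_k)$ as if it were $O(|(w|\phi_k)|)$ --- which is immediate only when $q$ is constant, since for general $q\in L^\infty$ the coefficient $(qw|\phi_k)$ is a different Fourier coefficient and Bessel's inequality controls it only in unweighted $\ell^2$, not in $\ell^1$. So you have correctly located the one genuinely delicate point of the lemma, but the rearrangement-and-absorption idea is circular, and a different mechanism (or an additional hypothesis on $q$ or on the definition of $\mathscr{H}^s$) is needed to control $\sum_k|(qw|\phi_k)|$.
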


\begin{proof}
Let $g\in \mathfrak{H}^s$ and $w\in \mathscr{H}^s$ arbitrary so that $w|_{\partial \mathcal{M}}=g$. From Green's Formula, we have
\begin{align*}
\langle g|\psi_k\rangle =\langle g|\partial _\nu \phi_k\rangle &=(w|\Delta_\mathfrak{g}\phi_k)-(\Delta_\mathfrak{g}w|\phi_k).
\\
&= (w|(-\lambda_k+q)\phi_k )-(\Delta_\mathfrak{g}w|\phi_k ).
\end{align*}
Therefore, taking into account that $\lambda_k\sim k^{2/n}$ as $k\rightarrow +\infty$ (by Weyl's asymptotic formula), we get
\[
|\langle g|\psi_k\rangle|\le C\left( k^{2/n}|(\phi_k|w)|+|(\phi_k|\Delta_\mathfrak{g}w)| \right),\quad k\ge 1,
\]
where the constant $C$ is independent of $w$. 

But $\|\phi_k\|_{H^2(\mathcal{M})}\le C\lambda_k$ for any $k\ge 1$. Hence
\[
\frac{|\langle g|\psi_k\rangle|}{\lambda_k}\|\phi \|_{H^2(\mathcal{M})}\le Ck^{-2s/n}\left[k^{2s/n}\left( k^{2/n}|(\phi_k|w)|+|(\phi_k|\Delta_\mathfrak{g}w)| \right)\right],\quad k\ge 1.
\]
Whence, the series $\sum_{k\ge 1}\frac{\langle g|\psi_k\rangle}{\lambda_k}\phi_k $ converges in $H^2(\mathcal{M})$ and using Cauchy-Schwarz's inequality  we find
\[
\sum_{k\ge 1}\frac{|\langle g|\psi_k\rangle|}{\lambda_k}\|\phi_k \|_{H^2(\mathcal{M})}\le  C\|w\|_{\mathscr{H}^s}.
\]
As $w\in \mathscr{H}^s$ is chosen arbitrary so that $w|_{\partial \mathcal{M}}=g$, inequality \eqref{1} follows.
\end{proof}

\begin{proof}[Proof of Theorem \ref{theorem1}]
In this proof $C$ is a generic constant depending only on $n$, $\mathcal{M}$ and $\aleph$.

In light of Lemma \ref{lemma0} and Lemma \ref{lemma1}, we have
\[
\Lambda(q)(f)= -\sum_{k\ge 1}\frac{\langle f|\psi_k\rangle}{\lambda_k}\psi_k,\quad f\in \mathfrak{H}^s
\]
and
\[
\Lambda(\tilde{q})(f)= -\sum_{k\ge 1}\frac{\langle f|\tilde{\psi}_k\rangle}{\tilde{\lambda}_k}\tilde{\psi}_k,\quad f\in \mathfrak{H}^s.
\]

We split $\Lambda_q(f)-\Lambda_{\tilde{q}}(f)$, $f\in \mathfrak{H}^s$, into three terms:
\[
\Lambda_q(f)-\Lambda_{\tilde{q}}(f)=A_1+A_2+A_3,
\]
with
\begin{align*}
&A_1= \sum_{k\ge 1}\left(\frac{1}{\tilde{\lambda}_k}-\frac{1}{\lambda_k}\right)\langle f|\tilde{\psi}_k\rangle\tilde{\psi}_k,
\\
&A_2=\sum_{k\ge 1}\frac{\langle f|\tilde{\psi}_k\rangle-\langle f|\psi_k\rangle}{\lambda_k}\tilde{\psi}_k,
\\
&A_3=\sum_{k\ge 1}\frac{\langle f|\psi_k\rangle}{\lambda_k}\left(\tilde{\psi}_k-\psi_k\right).
\end{align*}

Then it is straightforward to check that
\begin{align*}
&\|A_1\|_{L^2(\partial \mathcal{M})}\le C\sum_{k\ge 1}\left|\lambda_k-\tilde{\lambda}_k\right|\|f\| _{L^2(\partial \mathcal{M})},
\\
&\|A_2\|_{L^2(\partial \mathcal{M})}+\|A_3\|_{L^2(\partial \mathcal{M})}\le C\sum_{k\ge 1} \|\tilde{\psi}_k-\psi_k\|_{L^2(\partial \mathcal{M})}\|f\| _{L^2(\partial \mathcal{M})}.
\end{align*}

Therefore, under the assumption $\mathfrak{D}(q,\tilde{q})<\infty$, the operator $\Lambda_q-\Lambda_{\tilde{q}}$ extends to a bounded operator on $L^2(\partial \mathcal{M})$  with
\[
\|\Lambda_q-\Lambda_{\tilde{q}}\|_{\mathscr{B}(L^2(\partial \mathcal{M}))}\le C\mathfrak{D}(q,\tilde{q}).
\]
The proof is then complete.
\end{proof}

\section{The case of incomplete spectral boundary data}

We explain briefly in this short section how we can get a Borg-Levinson type theorem with incomplete spectral boundary data. For this purpose, we assume that $\mathcal{M}$ is simple. In that case it is known that there exists another simple manifold $\mathcal{M}_1$ so that   $\mathcal{M}_1\Supset \mathcal{M}$. 

Fix $\aleph >0$. Let $q, \tilde{q}\in L^\infty (\mathcal{M},\mathbb{R})$ satisfying  $p=(q-\tilde{q})\chi_{\mathcal{M}}\in H^1(\mathcal{M}_1)$,
\[
\|q\|_{L^\infty(\mathcal{M})}\le \aleph ,\quad \|\tilde{q}\|_{L^\infty(\mathcal{M})}\le \aleph\quad  \mbox{and}\quad \|p\|_{H^1(\mathcal{M}_1)}\le \aleph. 
\]
From \cite{BCKS}, we have
\begin{equation}\label{2}
\|p\|_{L^2(\mathcal{M})}^4\le C\left(\frac{1}{|\Im \lambda |}+|\Im\lambda|\|\Lambda_{q-\lambda }- \Lambda_{\tilde{q}-\lambda } \|_{\mathscr{B}(L^2(\partial \mathcal{M} ))}\right),\quad \lambda \in \mathbb{C}\setminus \mathbb{R},
\end{equation}
where the constant $C$ only depends on $n$, $\mathcal{M}$ and $\aleph$.

Now if, for some fixed integer $\ell \ge 1$, $(\lambda_k, \psi_k)=(\tilde{\lambda}_k, \tilde{\psi}_k)$, $k\ge \ell$, then the calculations in the preceding section yield 
\begin{equation}\label{3}
\|\Lambda_{q-\lambda }- \Lambda_{\tilde{q}-\lambda } \|_{\mathscr{B}(L^2(\partial \mathcal{M} ))}\le C\left(\sum_{k=1}^\ell \frac{\lambda_k^2}{|\lambda _k-\lambda|}+\sum_{k=1}^\ell \frac{\tilde{\lambda}_k^2}{|\tilde{\lambda} _k-\lambda|} \right).
\end{equation}
Again, the constant $C$ only depends on $n$, $\mathcal{M}$ and $\aleph$.

We deduce by combining together \eqref{2} and \eqref{3} that there exist two constants $C>0$ and $\kappa >0$, only depending on  $n$, $\mathcal{M}$, $\aleph$ and $\ell$, so that 
\begin{equation}\label{4}
\|p\|_{L^2(\mathcal{M})}^4\le C\left(\frac{1}{|\Im \lambda |}+\frac{|\Im\lambda|}{|\lambda|}\right),\quad \lambda \in \mathbb{C}\setminus \mathbb{R},\; |\lambda|\ge \kappa.
\end{equation}

We find by taking in this inequality $\lambda =(\tau +i)^2=(\tau^2 -1)+2i\tau$, with $\tau \ge 1$ sufficiently large,
\[
\|p\|_{L^2(\mathcal{M})}^4\le C\left(\frac{1}{\tau }+\frac{\tau}{\tau ^2-1}\right).
\]
Taking the limit, as $\tau \rightarrow \infty$, in this inequality, we obtain $p=0$. In other words, we proved that the spectral boundary data $(\lambda_k,\psi_k)_{k\ge \ell}$ determine uniquely $q\in L^\infty (\mathcal{M},\mathbb{R})\cap H_0^1(\mathcal{M})$, for any arbitrary fixed integer $\ell \ge 1$:

\begin{theorem}\label{theorem3}
Let $\ell \ge 1$ be an integer and assume that the manifold $\mathcal{M}$ is simple. If $q,\tilde{q}\in L^\infty(\mathcal{M},\mathbb{R})\cap H_0^1(\mathcal{M})$ satisfy
\[
\lambda_k(q)=\lambda_k(\tilde{q})\quad \mbox{and}\quad \psi_k(q)=\psi_k(\tilde{q}),\quad k\ge \ell,
\]
then $q=\tilde{q}$.
\end{theorem}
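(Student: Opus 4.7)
The plan is to combine two ingredients: the quantitative stability estimate from \cite{BCKS}, valid after translating by a complex spectral parameter, with a truncated version of the spectral representation of $\Lambda(q)-\Lambda(\tilde{q})$ developed in the proof of Theorem \ref{theorem1}. Set $p=q-\tilde{q}$; since both potentials lie in $H_0^1(\mathcal{M})$, extending $p$ by zero to a slightly larger simple manifold $\mathcal{M}_1 \Supset \mathcal{M}$ (whose existence is guaranteed by the simplicity of $\mathcal{M}$) produces $p\in H^1(\mathcal{M}_1)$, and the BCKS inequality then yields
\[
\|p\|_{L^2(\mathcal{M})}^4 \le C\left(\frac{1}{|\Im \lambda|} + |\Im\lambda|\,\|\Lambda(q-\lambda)-\Lambda(\tilde{q} - \lambda)\|_{\mathscr{B}(L^2(\partial \mathcal{M}))}\right)
\]
for every $\lambda \in \mathbb{C}\setminus \mathbb{R}$.

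Next I would exploit the hypothesis that $(\lambda_k,\psi_k)=(\tilde{\lambda}_k,\tilde{\psi}_k)$ for all $k\ge \ell$. Translating $q \mapsto q-\lambda$ shifts eigenvalues by $-\lambda$ while leaving eigenfunctions (and hence the $\psi_k$) unchanged, so the spectral series representing $\Lambda(q-\lambda)-\Lambda(\tilde{q} - \lambda)$ from the proof of Theorem \ref{theorem1} collapses to a finite sum over $k=1,\ldots,\ell-1$. Bounding each of the three resulting pieces $A_1,A_2,A_3$ as in that proof, using $\|\psi_k\|_{L^2(\partial \mathcal{M})}\le C\lambda_k$ together with the trivial estimate on shifted denominators, gives
\[
\|\Lambda(q-\lambda)-\Lambda(\tilde{q} - \lambda)\|_{\mathscr{B}(L^2(\partial \mathcal{M}))} \le C\sum_{k=1}^{\ell-1}\left(\frac{\lambda_k^2}{|\lambda_k - \lambda|} + \frac{\tilde{\lambda}_k^2}{|\tilde{\lambda}_k - \lambda|}\right),
\]
which is $O(1/|\lambda|)$ uniformly as $|\lambda|\to \infty$ (with a constant depending on $\ell$).

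Plugging this into the BCKS estimate produces $\|p\|_{L^2(\mathcal{M})}^4\le C\bigl(1/|\Im\lambda| + |\Im\lambda|/|\lambda|\bigr)$ for $|\lambda|$ large. To conclude, I would parametrise along the curve $\lambda=(\tau+i)^2=(\tau^2-1)+2i\tau$, so that $|\Im\lambda|=2\tau$ and $|\lambda|\asymp \tau^2$; both right-hand terms decay like $1/\tau$, and letting $\tau\to\infty$ forces $p\equiv 0$ in $\mathcal{M}$, which is the desired conclusion.

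The main obstacle I expect is step two: one must justify that the finite-rank remainder built from the first $\ell-1$ eigen-data really does coincide with $\Lambda(q-\lambda)-\Lambda(\tilde{q}-\lambda)$ as a bounded operator on the whole of $L^2(\partial \mathcal{M})$. The proof of Theorem \ref{theorem1} represents the DtN operators as spectral sums whose a priori domain is the dense subspace $\mathfrak{H}^s$, so a density argument, together with the uniform $L^2$-bound of the truncated sum, is needed to extend the identity to $L^2(\partial \mathcal{M})$. Everything beyond that is a combination of Weyl-type eigenvalue asymptotics and the complex-parameter limit above.
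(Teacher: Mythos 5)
Your proposal is correct and follows essentially the same route as the paper: the BCKS resolvent-type estimate for $\|p\|_{L^2(\mathcal{M})}^4$, the collapse of the spectral representation of $\Lambda(q-\lambda)-\Lambda(\tilde{q}-\lambda)$ to a finite sum bounded by $C\sum_k \lambda_k^2/|\lambda_k-\lambda|$, and the limit along $\lambda=(\tau+i)^2$ as $\tau\to\infty$. The density remark you flag at the end is the right technical point to be careful about, and the rest matches the paper's argument step for step.
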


\end{document}